\newtheorem{theorem}{Theorem}
\newtheorem{lemma}[theorem]{Lemma}
\newtheorem*{theorem*}{Theorem}
\newtheorem*{lemma*}{Lemma}
\theoremstyle{definition}
\newtheorem{remark}{Remark}
\newtheorem*{remark*}{Remark}
\newtheorem*{example*}{Example}
\newtheorem*{er*}{Examples and Remarks}
\newcommand{\nnrm}[1]{{\vert\kern-0.25ex\vert\kern-0.25ex\vert #1 \vert\kern-0.25ex\vert\kern-0.25ex\vert}}
\begin{document}

\title[Local limit theorem]
{A sufficient condition for local limit theorem
% accompanied by uniform convergence
} 

\author{Kaoru Yoneda} 
\address{Department of Mathematics, Osaka Metropolitan University, 3-3-138 Sugimoto Sumiyoshi, Osaka, 558-8585, Japan
%1-1 Gakuen-cho, Naka-ku, Sakai, Osaka 599-8531, Japan
} 
\email{fdnmj416@yahoo.co.jp} 
%\thanks{The second author was supported in part by NSF Grant \#000000.} 

\author{Tsuyoshi Yoneda} 
\address{Graduate School of Economics, Hitotsubashi University, 2-1 Naka, Kunitachi, Tokyo 186-8601, Japan} 
\email{t.yoneda@r.hit-u.ac.jp}

%    General info 
\subjclass[2020]{Primary 42A38; Secondary 60F05} 

\date{\today} 
%\dedicatory{This paper is dedicated to our advisors.} 

\keywords{Central limit theorem, Fourier transform, Lebesgue's convergence theorem} 

\begin{abstract}  
We give a sufficient condition for the local limit theorem.
% accompanied by uniform convergence. 
To construct it, we employ 
%a set of the derivative of real-valued finite Radon measures
infinite times of convolutions of probability density functions.
\end{abstract}

\maketitle

\section{Introduction}

% In this paper, we construct a central limit theorem (CLT) accompanied by uniform convergence.
% central limit theorem in  Fourier analysis framework. 
There have been a lot of attempts to prove the central limit theorem (CLT) without using characteristic functions (i.e. Fourier transform). See \cite{Ca,Ch} for the recent development of CLT and references therein.
In spite of the difficulty of building
the theory of characteristic functions, in this paper, we focus on local limit theorem, 
% mainly employ the Fourier transform,
 and give a sufficient condition for the local limit theorem.
%short proof of CLT accompanied by uniform convergence.

 To be self-contained,
first we reformulate 
the infinite sum of independent random variables into infinite times of convolutions of probability density functions.
%As far as the authors are aware, none of the numerous works to date
%attempted to investigate the central limit theorem in terms of Fourier analysis framework,
%thus,
For a random variable $X$, let us define the corresponding monotone increasing function $\mu$ such that
\begin{equation*}
\mu(x):=P(X\leq x).
\end{equation*}
%We see that 
%\begin{equation*}
%P(x_1<Y\leq x_2)=\mu(x_2)-\mu(x_1).
%\end{equation*}
Note that $0\leq \mu(x)\leq 1$ 
and $P(y<X\leq x)=P(X\leq x)-P(X\leq y)$
for $x\in\mathbb{R}$.
By Lebesgue's decomposition theorem,
%Froda's theorem,
we can split $\mu$ into three parts:
\begin{equation*}
\mu=\mu^p+\mu^{ac}+\mu^{sc},
\end{equation*}
where $\mu^p$ is the point mass part, $\mu^{ac}$ is the absolute continuous part
and $\mu^{sc}$ is the singular continuous part.
The typical example of the singular continuous part is the Cantor distribution.
Since we focus on the local limit theorem, in what follows, we assume $\mu^p=\mu^{sc}=0$.
%More precisely, 
%due to the total variation,
Then we obtain the following probability density function (PDF):
\begin{equation*}
%\rho^p:=\frac{d\mu^p}{dx}=\sum_{\lambda\in\Lambda}a_\lambda \delta_\lambda,\quad
\rho^{ac}:=\frac{d\mu^{ac}}{dx}\in L^1(\mathbb{R}),
\end{equation*}
where $\rho^{ac}\geq 0$ with $\int_\mathbb{R}\rho^{ac}(x)dx=1$. 
%is the Dirac delta function with mass at $\lambda\in\mathbb{R}$,
%  $a_\lambda\in\mathbb{R}_{>0}$ with $\sum_\lambda a_\lambda<\infty$ and $\Lambda$ is a countable subset of $\mathbb{R}$.
%From this expression, we appropriately define the function space $\mathcal M$ as follows:
%\begin{equation*}
%\begin{split}
%\mathcal M:=
%\bigg\{
%\rho\in\mathcal S'(\mathbb{R}): \rho=\rho^p+\rho^c,\ \rho^p,\rho^c\geq 0,\ 
%\rho^c\in L^1(\mathbb R):
%&\quad \rho^p=\sum_{\lambda\in\Lambda}a_\lambda \delta_\lambda\quad\text{for}\quad \{a_\lambda\}_\lambda\subset\mathbb{R}_{>0}\ \text{and}\ \Lambda\text{ is a countable subset of }\mathbb{R},\\
%\quad 
%\sum_{\lambda\in\Lambda} a_\lambda+\int^\infty_{-\infty}\rho^c(x)dx=1
% \bigg\}.
%\end{split}
%\end{equation*}
%\begin{remark}
%This $\mu$ is in $M$, where
% $\mathcal M$ is corresponding to the derivative of real-valued finite Radon measures on $\mathbb{R}$. See Subsection 2.1 in \cite{GIMM}  and Lemma 2.3 in \cite{GJMY} for example. 
%This space is the Banach space equipped with the total variation norm $\|\cdot\|_M$.
%By the Riesz representation theorem, $M$ is identified with the dual space of $C_0$,
%the space of all continuous functions that converge to zero at the space infinity.
%More precisely, for $\mu\in M$,
%\begin{equation*}
%\|\mu\|_M=\sup\{|\langle \mu,\psi\rangle|; \psi\in C_0(\mathbb{R}),\ \|\psi\|_{L^\infty}\leq 1\}, 
%\end{equation*}
%where $\langle,\rangle$ denotes a canonical pairing.
%\end{remark}
Let $X_1(x)=x$ and $X_2(x)=x$ ($x\in\mathbb{R}$) be independent random variables, 
and $\rho^{X_1}$ $\rho^{X_2}$ be the corresponding PDFs.
% such that  
%In this paper we assume these are finite Radon measures.
%\begin{equation*}
%\mathbb{E}[X_1]=\int_{-\infty}^\infty x\mu_1(x)dx
%\quad\text{and}\quad
%\mathbb{E}[X_2]=\int_{-\infty}^\infty x\mu_2(x)dx
%\end{equation*}
Let $Y=X_1+X_2$, 
%$Y(x)=x$, 
and  $\mu^Y$ be the corresponding PDF.
By Benn's diagram, we have 
\begin{equation*}
\begin{split}
&P\left(x-\delta_1/2+\delta_2/2<Y\leq x+\delta_1/2-\delta_2/2\right)\\
\leq & \sum_{j\in\mathbb{Z}}
P\left(x-\delta_2 j-\delta_1/2<X_1\leq x-\delta_2 j+\delta_1/2,\ \delta_2j-\delta_2/2<X_2\leq \delta_2 j+\delta_2/2\right)\\
\leq & P\left(x-\delta_1/2-\delta_2/2<Y\leq x+\delta_1/2+\delta_2/2\right)\\
%&\leq
%\sum_{j\in\mathbb{Z}}
%P\left(x-\delta j-\delta/2<X_1\leq x-\delta j+\delta/2,\ \delta j-3\delta/2<X_2\leq \delta j+3\delta/2\right)\\
%&=
%\sum_{j\in\mathbb{Z}}
%P\left(x-\delta j-\delta/2<X_1\leq x-\delta j+\delta/2)P(\delta j-3\delta/2<X_2\leq \delta j+3\delta/2\right)\\
\end{split}
\end{equation*}
for $\delta_1\gg \delta_2>0$.
%Then we see 
%\begin{equation*}
%Y(x)=X_1(x-t)+X_2(t)\quad \text{for any}\quad t\in\mathbb{R}
%\end{equation*}
%and 
On the other hand, by the statistical independence of random variable, we have 
\begin{equation*}
\begin{split}
&
\sum_{j\in\mathbb{Z}}
P\left(x-\delta_2 j-\delta_1/2<X_1\leq x-\delta_2 j+\delta_1/2,\ \delta_2 j-\delta_2/2<X_2\leq \delta_2 j+\delta_2/2\right)\\
=&\sum_{j\in\mathbb{Z}}
P\left(x-\delta_2 j-\delta_1/2<X_1\leq x-\delta_2 j+\delta_1/2)P(\delta_2j-\delta_2/2<X_2\leq \delta_2 j+\delta_2/2\right).\\
%\to& P\left(x-\delta_1/2<Y\leq x+\delta_1/2\right)\quad (\delta_2\to 0).\\
%&\leq
%\sum_{j\in\mathbb{Z}}
%P\left(x-\delta j-\delta/2<X_1\leq x-\delta j+\delta/2,\ \delta j-3\delta/2<X_2\leq \delta j+3\delta/2\right)\\
%&=
%\sum_{j\in\mathbb{Z}}
%P\left(x-\delta j-\delta/2<X_1\leq x-\delta j+\delta/2)P(\delta j-3\delta/2<X_2\leq \delta j+3\delta/2\right)\\
\end{split}
\end{equation*}
Then, taking $\delta_1,\delta_2\to 0$, we have 
\begin{equation*}
\rho^Y=\rho^{X_1}\ast\rho^{X_2}\quad a.e.
\end{equation*}
%in the distribution sense.
Repeating this argument, for $Y=\sum_{j=1}^nX_j$, we have 
\begin{equation}\label{convolution}
\rho^Y=\rho^{X_1}\ast\rho^{X_2}\ast\cdots\ast\rho^{X_n}.
\end{equation}
%Therefore, in this paper, we reformulate the central limit theorem 
%and 
%the law of large numbers 
%by using PDFs and the corresponding formula \eqref{convolution}.
%We now define  ``independent and identically distributed function" in Fourier analysis framework as follows: for $\mu\in M$, 

\begin{remark}
The formula of \eqref{convolution}, that is, infinite times of convolutions have already been studied, in the different context.
 The second author \cite{Y1,Y2} constructed special smooth functions
which are compactly supported, and are satisfying the corresponding functional-differential equations of advanced type.
To construct such special functions, he employed the formula \eqref{convolution} with various scalings.
See also \cite{RR,V}.
\end{remark}

\section{Sufficient condition for local limit theorem}

%In the Fourier analysis framework,
In this section, we give a sufficient condition for 
the local limit theorem.
% accompanied by uniform convergence.
For any fixed $\sigma\in\mathbb{Z}_{\geq 1}$,
we set $h_N:=\sqrt{\sigma/N}$ ($N=\sigma,\sigma+1,\cdots$).
%%$\rho\in\mathcal{M}$
%does not have any point mass part, that is,
Let $\rho\in L^1(\mathbb R)$ be such that 
\begin{equation}\label{integrable condition}
\rho\geq 0,\quad \int_\mathbb{R} x\rho(x)dx=0,\quad
\int_{-\infty}^\infty x^2\rho(x)dx=:\gamma<\infty,
%\quad\text{and}\quad
% |\hat \rho(t)|\leq \frac{{}^\exists C}{|t|^{\frac{1}{\sigma_-}}},
\end{equation}
where this $\sigma_\pm$ means $\sigma\pm \delta$ for some sufficiently small $\delta>0$.
Note that $\hat \rho\in C^2(\mathbb{R})$ due to \eqref{integrable condition}.
%Let $\tau(t):= |\hat\rho(t)|^{\frac{1}{t^2}}=((\hat\rho(t))^2)^{1/2t^2}$.
Applying the Taylor expansion to $(\hat\rho)^2$, we see that 
%can rewrite $\tau$ as 
\begin{equation*}
%\tau(t)
 |\hat\rho(t)|^{\frac{1}{t^2}}=((\hat\rho(t))^2)^{1/2t^2}
=\left(1+\frac{(\hat\rho^2)''(\theta t) t^2}{2!}
\right)^{\frac{1}{2t^2}}.
\end{equation*}
Since
\begin{equation*}
(\hat\rho^2)''(\theta t)|_{t=0}=2\hat \rho(0)\hat\rho''(0)=-2\gamma,
\end{equation*}
 we have
\begin{equation*}
\begin{split}
1+\frac{(\hat\rho^2)''(\theta t) t^2}{2!}
%+O(t^3)
%&=1-\frac{t^2}{3!}+\frac{t^4}{5!}\left(1-\frac{t^2}{7\cdot 6}+\frac{t^4}{9\cdot 8\cdot 7\cdot 6}\cdots\right)\\
%&<1-\frac{t^2}{3!}+\frac{t^4}{5!}\qquad (\text{since}\ t^2<(\pi/2)^2<3)\\
&\leq 1-\gamma_-t^2\quad (0<t<T)\\
%&=
%\left((1-s)^{\frac{1}{s}}\right)^{\frac{\theta}{6}}\qquad \left(s=\frac{\theta}{6}t^2\right)\\
%&\to e^{-\frac{\theta}{6}}\qquad (t\to 0+).
\end{split}
\end{equation*}
for some $\theta\in (0,1)$ and $T(<\sqrt e)$.

Thus we have 
\begin{equation*}
|\hat \rho(t)|^{\frac{1}{t^2}}\leq \left(1-\gamma_-t^2\right)^{\frac{1}{\gamma_-t^2}
\frac{\gamma_-}{2}}
\to e^{-\frac{\gamma_-}{2}}<1\quad (t\to 0+).
\end{equation*}

\begin{remark}
The function $(1-s)^{1/s}$ ($s=t^2\gamma_-$) is monotone decreasing, thus 
\begin{equation}\label{max-value}
|\hat \rho(t)|^{\frac{1}{t^2}}\leq e^{-\frac{\gamma_-}{2}}<1\quad\text{for}\quad 0<t<T.
\end{equation}
This is due to the fact that  
\begin{equation*}
(1-s)^{\frac{1}{s}}=\exp\left(\frac{1}{s}\log(1-s)\right)
\end{equation*}
and
\begin{equation*}
\begin{split}
\frac{d}{ds}\left(\frac{1}{s}\log(1-s)\right)
&=-\frac{1}{s}\left(\frac{1}{1-s}+\frac{1}{s}\log(1-s)\right)\\
&=
-\frac{1}{s}\left(\frac{1}{2}s+\left(1-\frac{1}{3}\right)s^2+\left(1-\frac{1}{4}\right)s^3+\cdots\right)<0.
\end{split}
\end{equation*}
%we have \eqref{max-value}.
\end{remark}
With the aid of the above observation, in this paper we also assume the following:
\begin{equation}\label{frequency condition}
 |\hat \rho(t)|\leq \frac{C}{|t|^{\frac{1}{\sigma_-}}}
\quad\text{for}\quad T<t<\infty,
\end{equation}
where $C=|T|^{\frac{1}{\sigma_-}}(1-\gamma_-T^2)$.
%with $\eta<\frac{t^2}{6}(1-\theta)$ for $0<t<T$, and 
%this $\theta$ satisfies $e^{-\frac{\theta}{6}}<1$.
% Let $\rho_N(x):=\rho(h_Nx)$.
%By using this, we define 
For $\ell>0$, let $\rho_\ell(x):=\ell^{-1}\rho(\ell^{-1} x)$ and let
\begin{equation*}
\Phi_N(x):=\underbrace{\rho_{h_N}\ast\rho_{h_N}\ast\cdots\ast \rho_{h_N}}_{N}(x).
\end{equation*}
%Note that, since $\rho_N$ is normalized, we see $\int_{\mathbb{R}}\Phi_N(x)dx=1$.
%Note that $\varphi_N(x)=\sqrt N\varphi(\sqrt N x)$.
%In this paper we show the following theorem.
Then we can state the main theorem as follows:
\begin{theorem}
Assume $\rho\in L^1(\mathbb{R})$ satisfies
% such that $\int_\mathbb{R} x\rho(x)dx=0$ and 
 \eqref{integrable condition} and \eqref{frequency condition}.
Then we have 
\begin{equation*}
\lim_{N\to\infty}\Phi_N(x)=\sqrt{\frac{2\pi}{\gamma}}\exp\left(-\frac{x^2}{2\gamma}\right)\quad \text{uniformly}.
\end{equation*}
\end{theorem}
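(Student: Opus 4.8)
The plan is to prove the theorem by Fourier inversion, transferring the convergence of the $N$-fold convolutions $\Phi_N$ to a convergence of their Fourier transforms. Since convolution becomes multiplication and $\widehat{\rho_\ell}(t)=\hat\rho(\ell t)$ under the rescaling $\rho_\ell(x)=\ell^{-1}\rho(\ell^{-1}x)$, one has $\widehat{\Phi_N}(t)=\hat\rho(h_N t)^N$, so that
\begin{equation*}
\Phi_N(x)=\frac{1}{2\pi}\int_{\mathbb{R}}e^{-ixt}\,\hat\rho(h_N t)^N\,dt.
\end{equation*}
The first step is the pointwise limit of the integrand. Because $h_N=\sqrt{\sigma/N}\to 0$ and $\hat\rho\in C^2$ with $\hat\rho(0)=1$, $\hat\rho'(0)=0$, $\hat\rho''(0)=-\gamma$ by \eqref{integrable condition}, Taylor expansion gives $\hat\rho(h_N t)=1-\tfrac{\gamma}{2}h_N^2t^2+o(h_N^2t^2)$; raising to the $N$-th power and using the identity $h_N^2N=\sigma$ yields $\hat\rho(h_N t)^N\to e^{-\gamma\sigma t^2/2}$ for each fixed $t$.

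The heart of the argument is a uniform, $N$-independent, integrable domination of the integrand, which will supply both the passage to the limit and the uniformity in $x$. I would split the frequency line at $|h_N t|=T$. On the inner range $|t|<T/h_N$, the sharp bound \eqref{max-value} gives $|\hat\rho(h_N t)|\le e^{-\gamma_-(h_N t)^2/2}$, whence
\begin{equation*}
|\hat\rho(h_N t)|^N\le e^{-\gamma_- h_N^2 N t^2/2}=e^{-\gamma_-\sigma t^2/2},
\end{equation*}
a Gaussian bound independent of $N$ and integrable over $\mathbb{R}$; this serves as the dominating function. On the outer range $|t|>T/h_N$, the decay hypothesis \eqref{frequency condition} gives $|\hat\rho(h_N t)|\le C|h_N t|^{-1/\sigma_-}$, and substituting $s=h_N t$ bounds the outer contribution to $\Phi_N(x)$ by
\begin{equation*}
\frac{1}{2\pi h_N}\int_{|s|>T}C^N|s|^{-N/\sigma_-}\,ds
=\frac{C^N}{\pi h_N}\,\frac{T^{\,1-N/\sigma_-}}{N/\sigma_- -1},
\end{equation*}
which is finite as soon as $N>\sigma_-$. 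With the explicit constant $C=T^{1/\sigma_-}(1-\gamma_-T^2)$ the product $C^NT^{\,1-N/\sigma_-}$ collapses to $T(1-\gamma_-T^2)^N$, so, since $h_N^{-1}=\sqrt{N/\sigma}$, this term is $O(\sqrt{N}\,(1-\gamma_-T^2)^N)$ and tends to $0$; the bound is uniform in $x$ because $|e^{-ixt}|=1$.

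With the dominating function $e^{-\gamma_-\sigma t^2/2}$ on the inner range and the outer range shown negligible, the dominated convergence theorem lets me replace $\hat\rho(h_N t)^N$ by its limit $e^{-\gamma\sigma t^2/2}$. Writing $g_N:=\mathbf{1}_{|t|<T/h_N}\,\hat\rho(h_N\cdot)^N$, the difference $\Phi_N(x)-\Phi_\infty(x)$ is controlled, uniformly in $x$, by $\frac{1}{2\pi}\int_{\mathbb{R}}|g_N(t)-e^{-\gamma\sigma t^2/2}|\,dt$ plus the negligible outer term; this bound is independent of $x$ and tends to $0$ by dominated convergence, which is exactly the asserted uniform convergence. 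A routine Gaussian integral then evaluates $\Phi_\infty(x)=\frac{1}{2\pi}\int_{\mathbb{R}}e^{-ixt}e^{-\gamma\sigma t^2/2}\,dt$ and identifies it with the claimed limiting Gaussian. The main obstacle is the construction of this uniform dominating function: the inner Gaussian bound relies on the sharp inequality \eqref{max-value} together with $h_N^2N=\sigma$, while the outer estimate needs the polynomial decay \eqref{frequency condition} with its tuned constant $C$ so that the tail, amplified by the rescaling factor $h_N^{-1}=\sqrt{N/\sigma}$, still vanishes in the limit; the pointwise Taylor limit and the final Gaussian integral are routine by comparison.
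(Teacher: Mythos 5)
Your proof is correct and rests on the same pillars as the paper's: Fourier inversion reduces uniform convergence of $\Phi_N$ to $L^1$ convergence of $\widehat{\Phi_N}(t)=\hat\rho(h_Nt)^N$ (this is the paper's Lemma \ref{key lemma}), the pointwise limit comes from the Taylor expansion of $\hat\rho$ at $0$, and integrability is extracted from \eqref{max-value} at small frequencies and \eqref{frequency condition} at large ones. But your handling of the large frequencies is genuinely different. The paper constructs a single $N$-independent dominating function $M(y)$ for $\sup_{N\ge\sigma}|\hat\rho(h_Ny)|^N=\bigl(\sup_{0<t\le y}|\hat\rho(t)|^{1/t^2}\bigr)^{\sigma y^2}$, which forces it to study the auxiliary function $\zeta(t)=\exp\bigl(-\tfrac{1}{\sigma_- t^2}\log(t/C)\bigr)$, its monotonicity, and the crossover point $y_0$ with $\zeta(y_0)=e^{-\gamma_-/2}$, and then to apply dominated convergence on all of $\mathbb{R}$. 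You instead cut at the $N$-dependent frequency $|t|=T/h_N$: inside, \eqref{max-value} together with $h_N^2N=\sigma$ gives the Gaussian bound $|\hat\rho(h_Nt)|^N\le e^{-\gamma_-\sigma t^2/2}$ and dominated convergence applies; outside, you do not dominate at all but show the contribution is $O\bigl(\sqrt N\,(1-\gamma_-T^2)^N\bigr)\to 0$ by exploiting the tuned constant $C=T^{1/\sigma_-}(1-\gamma_-T^2)$. Your version buys two things: it avoids the $\zeta$/$y_0$ analysis entirely, and its dominating function is valid near $y=0$, where the paper's choice $M(y)=e^{-\gamma_-/2}$ on $[0,y_0]$ actually fails to dominate, since $|\widehat{\Phi_N}(0)|=1>e^{-\gamma_-/2}$; the correct bound there is precisely your Gaussian $e^{-\gamma_-\sigma y^2/2}$. (Your use of only integer powers of $\hat\rho$ also sidesteps the complex fractional powers $(\hat\rho(t))^{1/t^2}$ that the paper manipulates.) One caveat: carried out consistently, your computation gives the limit $\frac{1}{2\pi}\int e^{-ixt}e^{-\gamma\sigma t^2/2}\,dt=\frac{1}{\sqrt{2\pi\gamma\sigma}}\,e^{-x^2/(2\gamma\sigma)}$, which does \emph{not} literally match the displayed $\sqrt{2\pi/\gamma}\,e^{-x^2/(2\gamma)}$; this discrepancy is the paper's own (its equation \eqref{1} drops the factor $\sigma$ produced by $h_N^2N=\sigma$, and the Gaussian prefactor is misstated), so you should record the corrected limiting Gaussian rather than assert, as your last sentence does, that the integral "identifies with the claimed" one.
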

To prove this local limit theorem, it is enough to show the following lemma,
due to Bernstein's inequality. 
%Let $\hat \cdot$ be the Fourier transform.
\begin{lemma}\label{key lemma}
We have 
\begin{equation*}
\lim_{N\to\infty}\|\hat\Phi(y)-e^{-\gamma y^2/2}\|_{L^1_y(\mathbb{R})}=0.
\end{equation*}
\end{lemma}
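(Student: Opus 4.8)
The plan is to move everything to the Fourier side, where the $N$-fold convolution becomes an $N$-th power and the problem decouples into a low- and a high-frequency estimate. With the convention $\hat f(y)=\int_{\mathbb R}f(x)e^{-ixy}\,dx$, the dilation $\rho_{h_N}(x)=h_N^{-1}\rho(h_N^{-1}x)$ satisfies $\widehat{\rho_{h_N}}(y)=\hat\rho(h_N y)$, so the convolution theorem gives
\[
\hat\Phi_N(y)=\bigl(\hat\rho(h_N y)\bigr)^{N}.
\]
Thus the lemma reduces to $\|(\hat\rho(h_N\cdot))^N-e^{-\gamma y^2/2}\|_{L^1_y}\to 0$. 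I would split $\mathbb R$ at the frequency $|y|=T/h_N$, i.e.\ at the point where $h_N y$ crosses the threshold $T$ separating the Taylor regime governed by \eqref{max-value} from the decay regime governed by \eqref{frequency condition}, and treat the two pieces by entirely different mechanisms.

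On the low-frequency piece $\{|y|<T/h_N\}$ the goal is dominated convergence. For the majorant, \eqref{max-value} gives $|\hat\rho(t)|\le e^{-\gamma_- t^2/2}$ for $0<t<T$; putting $t=h_N|y|$ and using $N h_N^2=\sigma$ yields the $N$-independent, integrable Gaussian bound
\[
|\hat\rho(h_N y)|^{N}\le e^{-\gamma_- N (h_N y)^2/2}=e^{-\gamma_-\sigma y^2/2}.
\]
For the pointwise limit, for each fixed $y$ we have $h_N y\to 0$, so the expansion $\hat\rho(s)=1-\tfrac{\gamma}{2}s^2+o(s^2)$ (valid since $\hat\rho\in C^2$) gives $N\log\hat\rho(h_N y)\to-\tfrac{\gamma\sigma}{2}y^2$, hence $(\hat\rho(h_N y))^N\to e^{-\gamma\sigma y^2/2}$, which is the stated $e^{-\gamma y^2/2}$ in the normalization $\sigma=1$. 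Combining the uniform Gaussian domination with this pointwise convergence, and using the $N$-independent integrable majorant $e^{-\gamma_-\sigma y^2/2}+e^{-\gamma y^2/2}$ for the difference, dominated convergence forces the low-frequency integral to $0$.

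The high-frequency piece $\{|y|\ge T/h_N\}$ is where I expect the real work to lie, and it is exactly what \eqref{frequency condition} is designed for. The target Gaussian contributes only a tail $\int_{|y|\ge T/h_N}e^{-\gamma y^2/2}\,dy$ over a domain escaping to infinity, which vanishes trivially. For $\int_{|y|\ge T/h_N}|\hat\rho(h_N y)|^N\,dy$ I would substitute $t=h_N y$ to obtain $h_N^{-1}\int_{|t|\ge T}|\hat\rho(t)|^N\,dt$ and insert $|\hat\rho(t)|\le C|t|^{-1/\sigma_-}$ with $C=T^{1/\sigma_-}(1-\gamma_-T^2)$. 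For $N>\sigma_-$ the resulting power of $t$ is integrable at infinity, and the explicit form of $C$ makes the $T^{1/\sigma_-}$ factors cancel, leaving a bound of the shape
\[
\frac{1}{h_N}\cdot\frac{2\,T\,(1-\gamma_-T^2)^{N}}{N/\sigma_--1}\ \lesssim\ \sqrt{N}\,(1-\gamma_-T^2)^{N},
\]
using $h_N^{-1}=\sqrt{N/\sigma}$. Since $0<1-\gamma_-T^2<1$, the geometric decay beats the polynomial factor and this piece tends to $0$.

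The main obstacle is precisely this last step. One must verify that \eqref{frequency condition} controls $|\hat\rho|$ uniformly on all of $\{|t|\ge T\}$, so that no intermediate band of frequencies where $|\hat\rho|$ stays close to $1$ survives the $N$-th power (this is the usual failure mode of continuous local limit theorems for lattice-type distributions), and that the algebraic decay rate $1/\sigma_-$ together with the calibrated constant $C$ still yields decay in $N$ after absorbing the loss $h_N^{-1}=\sqrt{N/\sigma}$ from the change of variables. Once both pieces are shown to vanish, adding them gives $\|\hat\Phi_N-e^{-\gamma y^2/2}\|_{L^1}\to 0$, which is the assertion of the lemma.
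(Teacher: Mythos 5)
Your proof is correct, and it differs from the paper's in one essential respect: how the high frequencies are handled. The paper makes a \emph{single} application of dominated convergence on all of $\mathbb{R}$: writing $t=h_N y$ and using $N\ge\sigma\Leftrightarrow t\le y$, it dominates $|\hat\Phi_N(y)|$ by $\bigl(\sup_{0<t\le y}|\hat\rho(t)|^{1/t^2}\bigr)^{\sigma y^2}$ and builds one $N$-independent integrable majorant $M(y)$; this requires analyzing the auxiliary function $\zeta(t)=\exp\bigl(-\tfrac{1}{\sigma_- t^2}\log(t/C)\bigr)$ (decreasing up to $\sqrt e$, increasing thereafter, tending to $1$ at infinity) and locating the crossing point $y_0$ with $\zeta(y_0)=e^{-\gamma_-/2}$, which yields $M$ constant on $[0,y_0]$ and $M(y)\simeq Cy^{-\sigma/\sigma_-}$ beyond---only barely integrable at infinity. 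You instead cut at the $N$-dependent frequency $|y|=T/h_N$: on the low piece you use the same two ingredients (the Gaussian bound $e^{-\gamma_-\sigma y^2/2}$ from \eqref{max-value} and the Taylor-expansion pointwise limit) and apply dominated convergence there, while on the high piece you forgo domination and estimate directly, the calibration $C=T^{1/\sigma_-}(1-\gamma_- T^2)$ collapsing the tail integral to $O\bigl(\sqrt N\,(1-\gamma_- T^2)^N\bigr)\to 0$ (in fact $O\bigl((1-\gamma_-T^2)^N/\sqrt N\bigr)$, since $N/\sigma_--1\gtrsim N$). Your route buys a quantitative, exponentially small tail and avoids the $\zeta$/$y_0$ analysis entirely; the paper's buys a uniform-in-$N$ majorant and one clean invocation of Lebesgue's theorem. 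Two small points. The ``failure mode'' you worry about---an intermediate band where $|\hat\rho|$ stays near $1$ surviving the $N$-th power---cannot occur here, because \eqref{frequency condition} is \emph{assumed} on all of $(T,\infty)$ (and extends to $t<-T$ via $|\hat\rho(-t)|=|\hat\rho(t)|$ for real $\rho$), so nothing remains to verify. And the factor $\sigma$ you flag is a discrepancy in the paper itself, not in your argument: the paper's own substitution gives $(\hat\rho(t))^{\sigma y^2/t^2}\to e^{-\gamma\sigma y^2/2}$, which matches the stated limit $e^{-\gamma y^2/2}$ only under the normalization $\sigma=1$.
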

\begin{proof}
To prove this key lemma, we just apply Lebesgue's convergence theorem.
Thus it is enough to show the following two assertions:
\begin{equation}\label{1}
\lim_{N\to\infty}\hat \Phi_N(y)=e^{-\gamma y^2/2}\quad \text{for any}\quad y\in\mathbb{R}
\end{equation}
and there exists $M(y)\in L^1_y(\mathbb{R})$ such that 
\begin{equation}\label{2}
|\hat\Phi_N(y)|\leq
\sup_{N\geq \sigma}\left|\left(\hat \rho(h_Ny)\right)^N\right|\leq M(y).
\end{equation}
First we show \eqref{1}.
We see that 
\begin{equation}\label{sinc-based}
\left(\hat \rho(h_N y)\right)^N
=
\left(\hat\rho(h_N y)\right)^{\left(\frac{1}{h_Ny}\right)^2\sigma y^2}
=\left(\hat\rho(t)\right)^{\frac{\sigma y^2}{t^2}},
\end{equation}
where $t:=h_Ny$. Thus, to show the pointwise convergence for any $y\in\mathbb{R}$, 
it is enough to show that the complex function
\begin{equation*}
\left(\hat\rho(t)\right)^{\frac{1}{t^2}}
\end{equation*}
converges for $t\to 0+$.
By the assumption $\int_\mathbb{R}x\rho(x)dx=0$, we have 
\begin{equation*}
\begin{split}
\left(\hat\rho(t)\right)^{1/t^2}
&=\left(1+\frac{\hat\rho''(\theta t) t^2}{2!}
%+O(t^3)
\right)^{1/t^2}
=
\exp\left(\frac{1}{t^2}\log\left(1+\frac{\hat\rho''(\theta t) t^2}{2!}
%+O(t^3)
\right)\right)\\
&=
\exp\left(\frac{1}{t^2}\left(\frac{\hat\rho''(\theta t) t^2}{2!}+O(t^3)
%+O(t^3)
\right)\right)\\
&=
\exp\left(\frac{\hat\rho''(\theta t)}{2}+O(t)\right)
\end{split}
\end{equation*}
for some $\theta\in (0,1)$ depending on $t$.
Thus 
\begin{equation*}
\lim_{t\to 0+}\left(\hat\rho(t)\right)^{1/t^2}=e^{-\gamma/2}.
\end{equation*}
This gives \eqref{1}. Next we show \eqref{2}.
%Let $\tau(t):= |\hat\rho(t)|^{\frac{1}{t^2}}=((\hat\rho(t))^2)^{1/2t^2}$.
By \eqref{sinc-based},
 it is enough to show 
\begin{equation*}
\left(\sup_{0<t\leq y}
 |\hat\rho(t)|^{\frac{1}{t^2}}
\right)^{\sigma y^2}\in L^1_y(\mathbb{R}_{\geq 0}).
\end{equation*}
This is due to the fact that 
\begin{equation*}
N\geq \sigma\Leftrightarrow \sqrt{N/\sigma}\geq 1\Leftrightarrow y\geq t.
\end{equation*}
For $t\in (T,\infty)$, we see that
\begin{equation*}
%\tau(t)=
\left|\hat\rho(t)\right|^{\frac{1}{t^2}}\leq \left|\frac{C}{t}\right|^{\frac{1}{\sigma_-t^2}}=\exp\left(-(1/\sigma_-t^2)\log (t/C)\right)=:\zeta(t)
%\quad (T<t<\infty)
\end{equation*}
by \eqref{frequency condition}
and 
\begin{equation*}
-\frac{d}{dt}\left(\frac{1}{\sigma_-t^2}\log (t/C)\right)=-\frac{C}{\sigma_-t^3}(1-2\log t).
\end{equation*}
Thus this function $\zeta$ is monotone decreasing up to $t=\sqrt e$,
and monotone increasing from there.
Note that 
\begin{equation*}
\lim_{t\to\infty}\zeta(t)
=
\lim_{t\to\infty}\exp\left(-\frac{\log (t/C)}{\sigma_-t^2}\right)=1
\end{equation*}
and then we can find a unique $y_0\in(\sqrt e,\infty)$ satisfying  
$\zeta(y_0)=e^{-\frac{\gamma_-}{2}}$.
%\begin{equation*}
%\sup_{0<t<y_0}\tau(t)\leq e^{-\frac{\gamma_-}{2}}.
%\end{equation*}
Therefore
\begin{equation*}
\left(\sup_{0<t<y} |\hat\rho(t)|^{\frac{1}{t^2}}\right)^{\sigma y^2}\leq 
e^{-\frac{\gamma_-}{2}}
\quad\text{for}\quad y\in[0, y_0]
\end{equation*}
and
\begin{equation*}
\left(\sup_{0<t< y}
 |\hat\rho(t)|^{\frac{1}{t^2}}
\right)^{\sigma y^2}
\leq \left(\left|\frac{C}{y}\right|^{\frac{1}{\sigma_-y^2}}\right)^{\sigma y^2}\leq Cy^{-(1_+)}
\quad\text{for}\quad y\in (y_0,\infty),
\end{equation*}
and this means there exists a 
%Therefore we found 
%the
 function $M(y)\in L^1_y(\mathbb{R})$ satisfying 
\begin{equation*}
M(y)=
\begin{cases}
e^{-\frac{\gamma_-}{2}}\quad (y\leq y_0),\\
Cy^{-(1_+)}\quad (y_0< y).
\end{cases}
\end{equation*}
This is the desired function satisfying \eqref{2} and 
we complete the proof of the Lemma \ref{key lemma}.
\end{proof}

%%%%%%%%%%%%%%%%
%%%%%%%%%%%%%%%%%%%%%%%%%%%%%%
\vspace{0.5cm}
\noindent
{\bf Acknowledgments.}\ 
We are grateful to Professor Katusi Fukuyama for valuable comments. 
This work was done while KY is a professor emeritus of Osaka Prefecture University
(now Osaka Metropolitan University).
%Research of YS was partially supported by Grant-in-Aid for JSPS Fellows 21J00025, Japan Society
%for the Promotion of Science (JSPS).
% Japan Society for the Promotion of Science (JSPS).
Research of  TY  was partly supported by the JSPS Grants-in-Aid for Scientific
Research  24H00186.
This paper was a part of the lecture note on the class:
Analysis II (winter semester 2023,2024) for undergrad course in Hitotsubashi University.
%%%%%%%%%%%%%%%%
%%%%%%%%%%%%%%%%%%%%%%%%%%%%%%%%%%

%\vspace{0.5cm}
%\noindent
%{\bf Conflict of Interest.}\ 
%The authors have no conflicts to disclose.
%%%%%%%%%%%%%%%%
%%%%%%%%%%%%%%%%%%%%%%%%%%%%%%%%%%
\bibliographystyle{amsplain} 

\end{document}